%
%
%
%

\documentclass[10pt,twoside]{amsart}

\usepackage{graphicx}
\usepackage{amssymb}
\usepackage{verbatim}
\usepackage[T1]{fontenc}
 \usepackage{pifont}
\usepackage[usenames, dvipsnames]{color}

\textwidth=16.00cm
\oddsidemargin=0.00cm
\evensidemargin=0.00cm
\setlength{\parskip}{3pt}

\numberwithin{equation}{section}
\hyphenation{semi-stable}

\newtheorem{theorem}{Theorem}[section] 
\newtheorem{lemma}[theorem]{Lemma}
\newtheorem{proposition}[theorem]{Proposition}

\theoremstyle{definition}
\newtheorem{definition}[theorem]{Definition}
\newtheorem{remark}[theorem]{Remark}
\newtheorem{example}[theorem]{Example}

\newcommand{\popo}{\mathbb{P}^1 \times \mathbb{P}^1}

\newcommand{\N}{\mathbb{N}}

\newcommand{\Si}{\mathcal{S}}

\newcommand{\M}{\mathcal{M}}
\newcommand{\Li}{\mathcal{L}}


\renewcommand{\thetheorem}{\thesection.\arabic{theorem}}
\setcounter{theorem}{0}

\begin{document}
\title[The Hilbert function of bigraded Algebras ]{The Hilbert function of bigraded Algebras in $k[\popo]$ }

\author{Giuseppe Favacchio}
\address{Dipartimento di Matematica e Informatica\\
Viale A. Doria, 6 - 95100 - Catania, Italy}
\email{favacchio@dmi.unict.it} \urladdr{www.dmi.unict.it/~gfavacchio}



\keywords{Hilbert function, multigraded albegra, numerical function}
\subjclass[2000]{ 13F20, 13A15, 13D40}

\begin{abstract} We classify the Hilbert functions of bigraded algebras in $k[x_1,x_2,y_1,y_2]$ by introducing a numerical function called a $Ferrers$ function.  
\end{abstract}

\maketitle


\renewcommand{\thetheorem}{\thesection.\arabic{theorem}}

\setcounter{section}{0}
\setcounter{theorem}{0}
\section{Introduction}
	
	Let $S:=k[x_1,\ldots, x_n]$ be the standard graded polynomial ring and let $I\subseteq S$ be a homogeneous ideal. The quotient ring  $S/I$ is called a \textit{standard graded k-algebra}. The Hilbert function of  $S/I$ is defined as $H_{S/I}: \mathbb N\to \mathbb N $ such that
	\[H_{S/I}(t):=\dim_k\left(S/I\right)_t= \dim_k S_t - \dim_k I_t. \]
	
	A famous theorem, due to Macaulay (cf. \cite{M}) and pointed out by Stanley (cf. \cite{St}), characterizes the numerical functions that are Hilbert functions of a standard graded $k$-algebra, i.e. the functions $H$ such that $H=H_{S/I}$ for some homogeneous ideal $I\subseteq S$. Macaulay's theorem is expressed in the language of $O$-sequence; for a modern treatment of this result see \cite{BH}.

	It is of interest to find an extension of the above theorem to the multi-graded case.
	Multi-graded Hilbert functions arise in many contexts. Properties related to the Hilbert function of multi-graded algebras are studied for instance in \cite{ACD, BG, PS, SVT,TV}. 
	A generalization of Macaulay's theorem to multi-graded rings is an open problem. 
	A partial result is Theorem 4.14 in \cite{ACD}. It gives non-sharp bounds on the growth of the Hilbert function of a bigraded algebra. 
	
	The goal of this work is to generalize the Macaulay's Theorem in the first significant case of bigraded algebras. The main result of this paper is Theorem \ref{main}, where we classify the numerical functions $H:\N^2\to \N$ which are Hilbert functions of a bigraded algebra in $k[x_1,x_2,y_1,y_2]$ where $\deg(x_i)=(1,0)$ and $\deg(y_j)=(0,1).$ 
	
	The paper is structured as follows. 
	In Section \ref{sec:ACD} we give the necessary background and notation. In Section \ref{sec:Partitions} we introduce a set of partitions of a number, and we define a numerical function called a $Ferrers$ function. Finally, in Section \ref{sec:main} we investigate a connection between partitions and set of monomials, and we prove that Ferrers functions characterize the Hilbert functions of bigraded algebras.   

	\textbf{Acknowledgment.} I would like to thank Aldo Conca for introducing me to this topic. I also would like to express my gratitude to Elena Guardo and Alfio Ragusa for the useful suggestions. The computer program CoCoA \cite{C} was indispensable for all the computations. I also thank the referee for his/her useful comments.

\section{Maximal growths for the Hilbert function of a bigraded algebra}\label{sec:ACD}
	
	Let $k$ be an infinite field, and let $R :=k[x_1, x_2 , y_1, y_2 ] =k[\popo]$ be the polynomial ring in 4 indeterminates with the grading defined by $\deg x_i = (1, 0)$ and $\deg y_j = (0, 1).$ Then $R=\oplus_{(i,j)\in \N^2}R_{(i,j)}$ where $R_{(i,j)}$ denotes the set of all homogeneous elements in $R$ of degree $(i,j).$  $R_{(i,j)}$ is generated, as a $k$-vector space, by the monomials $x_1^{i_1}x_2^{i_2}y_1^{j_1}y_2^{j_2}$ such that $i_1+i_2=i$ and $j_1+j_2=j.$
	An ideal $I\subseteq R$ is called a $bigraded$ ideal if it is generated by homogeneous elements with respect to this grading. A bigraded algebra $R/I$ is the quotient of $R$ with a bigraded ideal $I.$  The Hilbert function of a bigraded algebra $R/I$ is defined such that $H_{R/I}:\N^2\to \N$ and 
	$H_{R/I}(i,j):=\dim_k (R/I)_{(i,j)}=\dim_k R_{(i,j)} -\dim_k I_{(i,j)}$ where $I_{(i,j)} =I\cap R_{(i,j)}$ is the set of the bihomogeneous elements of degree $(i,j)$ in $I.$

	Throughout this notes we will work with the degree lexicographical order on $R$ induced by $x_1 > x_2 > y_1 > y_2.$ With this ordering we recall the definition of bilex ideal, introduced and studied in \cite{ACD}. We refer to \cite{ACD} for all preliminaries and for further results on bilex ideals.

\begin{definition}[\cite{ACD}, Definition 4.4]\label{Def4.4} 
	A set of monomials $L\subseteq R_{(i,j)}$ is called $bilex$ if for every monomial $uv\in L,$ where $u\in R_{(i,0)}$ and $v\in R_{(0,j)},$  the following conditions are satisfied:
	\begin{itemize}
		\item if $u'\in R_{(i,0)}$ and $u' > u$, then $u'v \in L;$
		\item if $v'\in R_{(0,j)}$ and $v' > v$, then $uv' \in L.$
	\end{itemize}
	A monomial ideal $I \subseteq R$ is called a $bilex\ ideal$ if $I_{(i,j)}$ is generated as $k$-vector space by a bilex set of monomials, for every $i,j\ge 0$.
\end{definition}

Bilex ideals play a crucial role in the study of the Hilbert function of bigraded algebras.
\begin{theorem}[\cite{ACD},Theorem 4.14] Let $J\subseteq R$ be a bigraded ideal. Then there exists a bilex ideal $I$
	such that $H_{R/I}=H_{R/J}.$
\end{theorem}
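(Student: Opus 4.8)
The plan is to reduce the multigraded statement to a sequence of one-variable Macaulay-type operations, so that the bilex ideal is built degree-by-degree in one of the two gradings. Fix the second degree $j$ and consider, for each $i\ge 0$, the $k[x_1,x_2]$-component: decomposing $R_{(i,j)}$ as $R_{(i,0)}\otimes_k R_{(0,j)}$, the ideal slice $J_{(i,j)}$ gives a subspace, and after a generic change of the $y$-coordinates we may assume the "row spaces" (the images of $J$ in the $R_{(0,j)}$-direction after fixing an $x$-monomial) are themselves lex in the $y_1>y_2$ order for every fixed $x$-monomial, because the relevant vector spaces over an infinite field can be put in "staircase" position; similarly in the $x$-direction. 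The first step, then, is to make precise this generic-coordinates reduction and isolate the two "partial lex" properties that the definition of bilex demands.

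Second, I would run the classical Macaulay construction in each single grading separately. Because $k[x_1,x_2]$ has only two variables, Macaulay's theorem there is especially transparent: a subspace of $R_{(i,0)}$ of given dimension can be replaced by the lex subspace of the same dimension without decreasing the dimensions in higher degree, and the lex subspace is just "the top $d$ monomials" $x_1^i, x_1^{i-1}x_2,\dots$. Applying this fixed-$j$ by fixed-$j$, and then symmetrically fixing $i$ and varying $j$, one gets a monomial ideal whose every bidegree piece is a product-of-intervals set. The key point to check is that these two lexification operations, performed in the two directions, can be made compatible so that the resulting set of monomials in each $R_{(i,j)}$ is simultaneously lex in $u$ (for fixed $v$) and lex in $v$ (for fixed $u$) — i.e. genuinely bilex — and that doing the $x$-lexification does not spoil the $y$-lexification already achieved. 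This is where a careful ordering of operations, or an iteration/limit argument (each step does not decrease Hilbert function values and the values are bounded, so the process stabilizes), will be needed.

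Third, I must verify that the passage from $J$ to the constructed $L$ preserves the Hilbert function exactly, not merely as an inequality. In the standard graded case one only needs that lexification does not change dimensions in each degree (which is automatic since lex is an ideal of the right dimensions); here the subtlety is that after replacing slices one must re-check that the collection $\{L_{(i,j)}\}$ is closed under multiplication by all four variables, i.e. forms an ideal. Multiplication by $x_1, x_2$ is handled by the $x$-direction lex property together with Macaulay's theorem (the lex set in degree $i$ maps into the lex set in degree $i+1$), and multiplication by $y_1,y_2$ by the symmetric statement; the cross-terms are where bilex-ness (both conditions holding at once) is exactly what is required. So the logical skeleton is: (1) generic coordinates; (2) direction-by-direction Macaulay lexification, iterated to a fixed point; (3) check the fixed point is a bilex ideal with the same Hilbert function.

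The main obstacle I anticipate is step (2)–(3): showing that one can achieve \emph{both} lex conditions of Definition \ref{Def4.4} simultaneously in every bidegree while keeping an ideal and not changing the Hilbert function. Lexifying in the $x$-direction is easy in isolation, and so is lexifying in the $y$-direction, but interleaving them risks an infinite regress; the resolution is presumably a monotonicity + boundedness argument guaranteeing termination, or an explicit description of the bilex set directly from the "row dimension" data $d_{(i,j)}:=\dim J_{(i,j)}$ together with the partial sums that record how these dimensions distribute across the $x$-monomials. I would expect the proof to extract exactly this combinatorial data from $J$, observe it satisfies Macaulay-type growth constraints in each variable (this is where Theorem 4.14's non-sharp bounds, or rather the two-variable Macaulay theorem, enters), and then read off the bilex ideal $I$ whose bidegree pieces realize that data.
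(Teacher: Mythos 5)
This statement is quoted in the paper from \cite{ACD} without proof, so there is no in-paper argument to compare against; the reference proof proceeds by passing to the bigeneric initial ideal (which is a \emph{bistable} monomial ideal) and then, in each bidegree, replacing the bistable set by the bilex set of the same cardinality, using explicit growth lemmas that compare the shadows of bistable and bilex sets to verify that the resulting collection of graded pieces is closed under multiplication by the variables. Your outline is in the same general spirit (generic coordinates, then combinatorial normalization), but it is not a proof: the step you yourself flag as the main obstacle is precisely the content of the theorem, and your proposed resolution does not close it.

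Concretely, there are two gaps. First, a generic change of coordinates yields bistability, i.e.\ closure of the monomial support under the moves $x_2\mapsto x_1$ and $y_2\mapsto y_1$; this is strictly weaker than the two conditions of Definition \ref{Def4.4}, so step (1) does not put the row and column slices in lex position, and the passage from bistable to bilex is where all the work lies. Second, the ``iterate the two lexifications to a fixed point'' argument is not substantiated: (a) after lexifying the $x$-slices of $L_{(i,j)}$ for each fixed $y$-monomial, the new set of the same cardinality need not contain $R_{(1,0)}L_{(i-1,j)}$ or $R_{(0,1)}L_{(i,j-1)}$, so ideal-ness is not preserved step by step and must be re-established after every operation; (b) the monotone bounded quantity you invoke cannot be the Hilbert function, since that is required to stay \emph{constant} throughout — you would need a different monovariant (e.g.\ total lex-rank of the monomials in each bidegree), together with a proof that each compression weakly increases it, strictly when the configuration is not yet bilex, and that changes propagate through only finitely many bidegrees at a time; and (c) even granting termination, you have not shown the limit is bilex rather than merely a fixed point of your two operations. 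The known proof avoids iteration entirely by \emph{defining} the candidate bilex set in each bidegree directly from the numbers $\dim_k J_{(i,j)}$ and proving the required containments via the growth lemmas; without an analogue of those lemmas your plan cannot be completed as stated.
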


The next theorem gives an upper bound for the growth of the Hilbert function of bigraded algebras. It is an  reformulation of \cite{ACD}, Theorem 4.18.
\begin{theorem}\label{Thm4.18}Let $I\subseteq R$ be a bigraded ideal. For all $(i,j)\in \mathbb N^2$, if $p:=\left\lfloor \frac{H_{R/I}(i,j)}{(i+1)}\right\rfloor$ and $q:=\left\lfloor \frac{H_{R/I}(i,j)}{(j+1)}\right\rfloor$, 
	then
	$$\left\{\begin{array}{l}
	H_{R/I}(i+1,j)\le H_{R/I}(i,j)+p \\
	H_{R/I}(i,j+1)\le H_{R/I}(i,j)+q
	\end{array}\right. $$
\end{theorem}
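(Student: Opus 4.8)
The plan is to reduce to a combinatorial statement about bilex ideals and then bound the Hilbert function one ``column'' at a time. By the previous theorem there is a bilex (hence monomial) ideal with the same Hilbert function as $I$, so we may assume $I$ is itself bilex. It suffices to prove the first inequality for an arbitrary bigraded ideal: the second is obtained by applying the first, at the bidegree $(j,i)$, to the image of $I$ under the automorphism of $R$ exchanging $x_s$ with $y_s$ for $s=1,2$ (this swaps the two components of the grading, and so swaps the two inequalities and the quantities $p,q$). Write $H:=H_{R/I}(i,j)$.

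Every monomial of $R_{(i+1,j)}$ factors uniquely as $u'w$ with $u'\in R_{(i+1,0)}$ and $w\in R_{(0,j)}$, and there are exactly $j+1$ possibilities for $w$, namely $w_b:=y_1^{j-b}y_2^b$ for $b=0,\dots,j$. For each $w=w_b$ put
\[
\lambda_w:=\#\{\,u\in R_{(i,0)} : uw\notin I\,\},\qquad \mu_w:=\#\{\,u'\in R_{(i+1,0)} : u'w\notin I\,\}.
\]
Since the $x$-part and the $y$-part of a monomial are uniquely determined, $H=\sum_w\lambda_w$ and $H_{R/I}(i+1,j)=\sum_w\mu_w$, and clearly $0\le\lambda_w\le i+1$. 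By the first condition in the definition of a bilex set, for each fixed $w$ the monomials $u\in R_{(i,0)}$ with $uw\notin I$ are precisely the $\lambda_w$ smallest among the $i+1$ monomials of $R_{(i,0)}$ in the degree-lexicographic order; equivalently $\{u:uw\in I\}$ is the degree-$i$ part of a lex ideal of $k[x_1,x_2]$.

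The heart of the proof is the estimate $\mu_w\le\lambda_w+\varepsilon_w$, where $\varepsilon_w=1$ if $\lambda_w=i+1$ and $\varepsilon_w=0$ otherwise. To see this, write $u'=x_1^{a}x_2^{i+1-a}$ with $0\le a\le i+1$. Since $I$ is a monomial ideal, $u'w=x_1\cdot(x_1^{a-1}x_2^{i+1-a}w)$ when $a\ge1$, and $u'w=x_2\cdot(x_1^{a}x_2^{i-a}w)$ when $a\le i$; in each case the factor in parentheses has $x$-degree $i$, and it lies in $I$ as soon as its exponent on $x_1$ is $\ge\lambda_w$ (that is, as soon as its $x$-part is not among the $\lambda_w$ smallest monomials of $R_{(i,0)}$). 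Reading off these two constraints, $u'w\notin I$ forces $a\le\lambda_w-1$, with the single exception $a=i+1$, which can occur only when $\lambda_w=i+1$. Hence $\mu_w\le\lambda_w$ if $\lambda_w\le i$, and $\mu_w\le i+2=\lambda_w+1$ if $\lambda_w=i+1$, which is the claim. (Equivalently, this estimate is Macaulay's growth bound in the two variables $x_1,x_2$ applied to $k[x_1,x_2]/\{u:uw\in I\}$.)

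Summing over the $j+1$ columns gives
\[
H_{R/I}(i+1,j)=\sum_w\mu_w\le\sum_w\lambda_w+\#\{w:\lambda_w=i+1\}=H+\#\{w:\lambda_w=i+1\}.
\]
Finally, each $w$ with $\lambda_w=i+1$ contributes exactly $i+1$ to $H=\sum_w\lambda_w$, so $\#\{w:\lambda_w=i+1\}\le H/(i+1)$; being a nonnegative integer, this count is at most $\lfloor H/(i+1)\rfloor=p$, and the first inequality follows. I expect the only genuinely delicate point to be the case analysis in the third paragraph---keeping straight the degree-lexicographic order, the two ``shift'' factorizations of $u'w$, and the boundary values $a=0$ and $a=i+1$; the reduction to the bilex case, the column decomposition, and the concluding count with the floor are all routine once that estimate is in hand.
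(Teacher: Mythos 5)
Your proof is correct. Note that the paper itself gives no argument for this statement---it is imported verbatim as Theorem 4.18 of \cite{ACD}---so there is no internal proof to compare against; what you have written is a legitimate self-contained derivation. Your reduction to the bilex case via the preceding theorem is exactly the role that result plays in \cite{ACD}, and the rest is a clean column-by-column count: the bilex condition makes each slice $\{u\in R_{(i,0)}: uw\notin I\}$ a lex segment of length $\lambda_w$ in $k[x_1,x_2]$, the two factorizations $u'w=x_1\cdot(\cdot)$ and $u'w=x_2\cdot(\cdot)$ give $\mu_w\le\lambda_w+\varepsilon_w$ with $\varepsilon_w=1$ only when the slice is full, and the number of full slices is at most $\lfloor H/(i+1)\rfloor$ since each contributes $i+1$ to $H$. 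The boundary cases $a=0$ and $a=i+1$ are handled correctly, and the symmetry argument for the second inequality is sound because you apply the already-proved first inequality to the swapped ideal (which is again bigraded) rather than trying to transport the bilex structure. No gaps.
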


\begin{remark}\label{Es1}
	The bound in Theorem \ref{Thm4.18} is not sharp even if $\dim_k(I_{(i,j)})=  2.$ Take, for instance, $I':=(x_1y_1,x_1y_2)+ (x_1,x_2,y_1,y_2)^4$ and $I'':=(x_1y_1,x_2y_1)+ (x_1,x_2,y_1,y_2)^4.$  Then the Hilbert functions of the associated bigraded algebras are 
	$$ H_{R/I'}:= \begin{array}{l|llllll}
	  & 0 & 1 & 2 & 3 & 4 &\ldots\\
	  \hline
	 0  & 1 & 2 & 3 & 4 & 0 &\ldots\\
	 1  & 2 & 2 & 3 & 0 & 0 &\ldots\\
	 2  & 3 & 2 & 0 & 0 & 0 &\ldots\\
	 3  & 4 & 0 & 0 & 0 & 0 &\ldots\\
	 4  & 0 & 0 & 0 & 0 & 0 &\ldots\\       
	 \end{array}
	 \ \ \ \
	  H_{R/I''}:= \begin{array}{l|llllll}
	  & 0 & 1 & 2 & 3 & 4 &\ldots\\
	  \hline
	  0  & 1 & 2 & 3 & 4 & 0 &\ldots\\
	  1  & 2 & 2 & 2 & 0 & 0 &\ldots\\
	  2  & 3 & 3 & 0 & 0 & 0 &\ldots\\
	  3  & 4 & 0 & 0 & 0 & 0 &\ldots\\
	  4  & 0 & 0 & 0 & 0 & 0 &\ldots\\       
	  \end{array}.
	  $$
	  
	By Theorem \ref{Thm4.18} we have $\left\{\begin{array}{l}
	H_{R/I}(2,1)\le 3 \\
	H_{R/I}(1,2)\le 3
	\end{array}\right. $ but the numeric function
		$$ H:= \begin{array}{l|llllll}
		& 0 & 1 & 2 & 3 & 4 &\ldots\\
		\hline
		0  & 1 & 2 & 3 & 4 & 0 &\ldots\\
		1  & 2 & 2 & 3 & 0 & 0 &\ldots\\
		2  & 3 & 3 & 0 & 0 & 0 &\ldots\\
		3  & 4 & 0 & 0 & 0 & 0 &\ldots\\
		4  & 0 & 0 & 0 & 0 & 0 &\ldots\\       
		\end{array}
		$$
	is not the Hilbert function of any bigraded algebra. Indeed, for $t\ge 0,$   $\sum_{i+j=t}H(i,j)$ gives rise to the sequence $(1, 4, 8, 14, 0, \ldots )$ which fails to be an $O$-sequence. Thus, this example also shows two different maximal growths of the Hilbert function from the degree $(1,1)$ to $(1,2)$ and $(2,1).$ 
\end{remark}

  To formalize the idea of maximal growths we need to introduce a partial order in $\N^2.$ For $(a,b), (c,d)\in \N^2$ we say that $(a,b)\leq (c,d)$ iff $a\le c$ and $b\le d.$ Moreover we say that $(a,b)< (c,d)$ iff $(a,b)\leq (c,d)$ and  $a< c$ or $b< d.$

\begin{definition}Let $I\subseteq R$ be a bigraded ideal. We say that $H_{R/I},$ the Hilbert function of $R/I,$ has a maximal growth in degree $(i,j)$ if $(H_{R/I}(i+1,j), H_{R/I}(i,j+1))$ is a maximal element in the set $\{ (H_{R/J}(i+1,j), H_{R/J}(i,j+1)) \ |\ R/J\  \text{a bigraded algebra with}\  H_{R/J}(i,j)=H_{R/I}(i,j)  \}.$ 
\end{definition}

Note that the above definition does not require that $R/I$ and $R/J$ have the same Hilbert function in the degrees less than $(i,j)$.

\section{Partitions of a number and Ferrers functions}\label{sec:Partitions}
In this section we introduce a partition of a number which slightly generalizes Definition 3.12 in \cite{GVT} that allows the authors to characterize arithmetically Cohen-Macaulay sets of points in $\popo$. 
\begin{definition} 
	Given $h,\ell_1,\ell_2\in \N$ we say that $\alpha:=(p_1,\ldots, p_{t})\in \N^t$ is a partition of $h$  of $sides$ $(\ell_1,\ell_2)$ if $h=p_1+\cdots+p_{t},$  $\ell_1\ge p_1\ge \ldots\ge p_{t}\ge 0$ and $t=\ell_2.$
	We denote by $\lambda_1(\alpha)$ the number of entries in $\alpha$ equal to $\ell_1$, i.e. $\lambda_1(\alpha):=|\{j\ |\ p_j=\ell_1\}|$ and by $\lambda_2(\alpha):=p_{\ell_2}.$ Moreover we call $\lambda(\alpha):=(\lambda_1(\alpha),\lambda_2(\alpha))\in \N^2$ the $size$ of $\alpha.$
\end{definition}

For example, $\alpha=(5,5,5,4,1,1)$ is a partition of $21$ of sides $(5,6)$  with size $\lambda(\alpha)=(3,1)$.


We denote by $\Si(h)^{(\ell_1,\ell_2)}$ the set of all the partitions of $h$ of sides $(\ell_1,\ell_2),$ and by $\Li(h)^{(\ell_1,\ell_2)}$ the set of the sizes of the elements in  $\Si(h)^{(\ell_1,\ell_2)}.$ 

\begin{example}
	$\Si(4)^{(3,3)}:=\{(3,1,0),(2,2,0),(2,1,1)\}$ and $\Li(4)^{(3,3)}:=\{(1,0),(0,0),(0,1)\}.$
\end{example}

Furthermore, we set $\Si(\cdot)^{(\ell_1,\ell_2)}:=\cup_{h\in \N }\Si(h)^{(\ell_1,\ell_2)}.$ We introduce in $\Si(\cdot)^{(\ell_1,\ell_2)}$ an inner operation. Take $\alpha:=(p_1,\ldots, p_{t})$ and $\alpha':=(p'_1,\ldots, p'_{t})$ elements in $\Si(\cdot)^{(\ell_1,\ell_2)}$. Then we define 
$$\alpha \cap \alpha':= (\min\{p_1,p'_1\}, \ldots, \min\{p_t,p'_t\})\in \Si(\cdot)^{(\ell_1,\ell_2)}. $$

Let $\alpha, \alpha'\in \Si(\cdot)^{(\ell_1,\ell_2)}$. We say that $\alpha'\le \alpha$ iff $\alpha' \cap \alpha=\alpha',$ i.e.
the entries in $\alpha'$ are less than or equal to the entries in $\alpha$ componentwise. 

Let $\alpha:=(p_1,p_2,\ldots, p_{t})\in\Si(\cdot)^{(\ell_1,\ell_2)}$ be a partition of size $(\lambda_1,\lambda_2)$. Then we associate to $\alpha$ partitions of sides $(\ell_1+1,\ell_2)$ and $(\ell_1,\ell_2+1)$ as follows. 
We denote by $\alpha^{(1)}:=(p'_1,p'_2,\ldots, p'_{t})\in\Si(\cdot)^{(\ell_1+1,\ell_2)}$ where  $$p'_j:=\begin{cases}p_j+1 &\ \text{if}\ j\le\lambda_1\\
 p_j &\ \text{if}\ j>\lambda_1 \end{cases},$$
 and we denote by 
$\alpha^{(2)}:=(p_1,p_2,\ldots, p_{t-1}, p_{t}, p_{t})\in\Si(\cdot)^{(\ell_1,\ell_2+1)}.$

We are ready to introduce the Ferrers functions. 

\begin{definition}\label{DefFerrers} Let $H:\N\times\N\to \N$ be a numerical function. We say that $H$ is a Ferrers function if $H(0,0)=1$ and, for any $(i,j)\in \N^2,$ there exists  a partition of $H(i,j)$ of sides $(i+1, j+1),$ namely $\alpha_{ij}\in \Si{(H(i,j))}^{(i+1,j+1)},$ such that all the partitions satisfy the condition  
	$$\left\{\begin{array}{ll}
	\alpha_{ij}\le\alpha_{i-1j}^{(1)}& \text{if}\ i>0 \\
	\alpha_{ij}\le\alpha_{ij-1}^{(2)}& \text{if}\ j>0 \\
	\end{array}\right.$$
\end{definition}

\begin{example}
	Let $H:\mathbb N^2 \to \mathbb N$ be the numerical function
	$H(i,j)= (i+1)(j+1).$ For any $i,j\in \mathbb N,$ the only partition of sides $(i+1,j+1)$ of the integer $(i+1)(j+1)$ is  $\alpha_{ij}:=(\underbrace{i+1, i+1, \cdots, i+1}_{j+1})$. 
	For any $i,j\in \mathbb N$, we have $\alpha_{i-1j}^{(1)}=\alpha_{ij}$ and $\alpha_{ij-1}^{(2)}=\alpha_{ij}$. Then 
	the conditions in  Definition \ref{DefFerrers} are satisfied and therefore $H$ is a Ferrers function. 	
\end{example}

\begin{remark}\label{remG}
	Note that if $H$ is a Ferrers function then we have a bound on the growth of $H$ since 
	$$H(i,j)\le \min \{H(i-1,j)+\lambda_1(\alpha_{i-1j}), H(i,j-1)+\lambda_2(\alpha_{ij-1})\}.$$
\end{remark}

\begin{proposition}Let $H$ be a Ferrers function, set $g_1:=H(i+1,j)-H(i,j)$ and  $g_2:=H(i,j+1)-H(i,j).$ Then $(g_1,g_2)\le(\lambda_1,\lambda_2),$ for some $(\lambda_1,\lambda_2)\in \Li{(H(i,j))}^{(i+1,j+1)}.$
\end{proposition}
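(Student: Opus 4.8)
The plan is to let $(\lambda_1,\lambda_2):=\lambda(\alpha_{ij})$ be the size of the partition $\alpha_{ij}\in\Si{(H(i,j))}^{(i+1,j+1)}$ whose existence is guaranteed by the hypothesis that $H$ is a Ferrers function. By the very definition of $\Li{(\cdot)}^{(\cdot,\cdot)}$ we then have $(\lambda_1,\lambda_2)\in\Li{(H(i,j))}^{(i+1,j+1)}$, so the whole statement reduces to the two inequalities $g_1\le\lambda_1$ and $g_2\le\lambda_2$. Throughout I write $|\beta|$ for the sum of the entries of a partition $\beta$, so that $|\beta|=h$ for every $\beta\in\Si{(h)}^{(\ell_1,\ell_2)}$, and I use the trivial observation that $\beta\le\gamma$ (componentwise, for partitions of the same sides) implies $|\beta|\le|\gamma|$.

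For $g_1$ I would apply the first condition of Definition \ref{DefFerrers} at the index $(i+1,j)$: since $i+1>0$ this condition reads $\alpha_{i+1j}\le\alpha_{ij}^{(1)}$. Passing to sums of entries gives $H(i+1,j)=|\alpha_{i+1j}|\le|\alpha_{ij}^{(1)}|$. Now $\alpha_{ij}^{(1)}$ is obtained from $\alpha_{ij}$ by adding $1$ to exactly the $\lambda_1$ entries that equal $i+1$ and leaving the remaining entries unchanged, so $|\alpha_{ij}^{(1)}|=|\alpha_{ij}|+\lambda_1=H(i,j)+\lambda_1$. Combining, $g_1=H(i+1,j)-H(i,j)\le\lambda_1$.

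The argument for $g_2$ is symmetric: applying the second condition of Definition \ref{DefFerrers} at the index $(i,j+1)$ (valid since $j+1>0$) yields $\alpha_{ij+1}\le\alpha_{ij}^{(2)}$, hence $H(i,j+1)=|\alpha_{ij+1}|\le|\alpha_{ij}^{(2)}|$. Writing $\alpha_{ij}=(p_1,\ldots,p_t)$, the partition $\alpha_{ij}^{(2)}$ is obtained by appending one more copy of the last entry $p_t=\lambda_2(\alpha_{ij})=\lambda_2$, so $|\alpha_{ij}^{(2)}|=H(i,j)+\lambda_2$, and therefore $g_2\le\lambda_2$.

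None of these steps presents a genuine difficulty; the proposition is really just the quantitative restatement of Remark \ref{remG}, rephrased so that the bounding vector $(\lambda_1,\lambda_2)$ is explicitly produced as an element of $\Li{(H(i,j))}^{(i+1,j+1)}$. The only points deserving a moment of attention are the two bookkeeping identities $|\alpha^{(1)}|=|\alpha|+\lambda_1(\alpha)$ and $|\alpha^{(2)}|=|\alpha|+\lambda_2(\alpha)$, which fall straight out of the definitions of $\alpha^{(1)}$ and $\alpha^{(2)}$, and the fact that the two Ferrers conditions we invoke are the ones attached to the shifted indices $(i+1,j)$ and $(i,j+1)$, whose side hypotheses ``$>0$'' hold automatically and in particular do not require $i>0$ or $j>0$.
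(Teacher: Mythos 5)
Your proof is correct and follows essentially the same route as the paper: take $(\lambda_1,\lambda_2)$ to be the size of the partition $\alpha_{ij}$ supplied by Definition \ref{DefFerrers} and deduce the growth bound from $\alpha_{i+1j}\le\alpha_{ij}^{(1)}$ and $\alpha_{ij+1}\le\alpha_{ij}^{(2)}$ by summing entries. The only difference is that the paper delegates the entry-summing step to Remark \ref{remG} (stated without proof), whereas you spell out the identities $|\alpha^{(1)}|=|\alpha|+\lambda_1(\alpha)$ and $|\alpha^{(2)}|=|\alpha|+\lambda_2(\alpha)$ explicitly.
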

\begin{proof} Since $H$ is a Ferrers function then, from Remark \ref{remG}, there exists $\alpha_{ij}\in \Si{(H(i,j))}^{(i+1,j+1)},$ such that $(H(i+1,j),H(i,j+1))\le (H(i,j)+\lambda_1(\alpha_{ij}),H(i,j)+\lambda_2(\alpha_{ij}) ).$
Therefore $(g_1,g_2)\le (\lambda_1(\alpha_{ij}), \lambda_2(\alpha_{ij}) ).$	
\end{proof}

\begin{example}The numeric function
	$$ H:= \begin{array}{l|lllll}
	& 0 & 1 & 2 & 3 & \ldots\\
	\hline
	0  & 1 & 2 & 3 & 0 &\ldots\\
	1  & 2 & 2 & 3 & 0 & \ldots\\
	2  & 3 & 3 & 0 & 0 &\ldots\\
	3  & 0 & 0 & 0 & 0 &\ldots\\
       
	\end{array}
	$$
fails to be a Ferrers function since $\Si(2)^{(2,2)}=\{(2,0),(1,1)\}$ and $(1,1)\notin \Li(2)^{(2,2)}=\{(1,0),(0,1)\}.$	
\end{example}

\section{The Hilbert function of a bigraded algebra}\label{sec:main}
In order to relate Ferrers functions to bigraded algebras we introduce a correspondence between partitions and sets of monomials.

\begin{definition}\label{T(p,q)} 
	We denote by $T(p,q)_{(a,b)}\in R_{(a,b)},$ where $(0,0)\le (p,q)\le (a+1,b+1)\in \N^2,$ the monomial
	$$T(p,q)_{(a,b)}:=\begin{cases}
	x_1^{p-1}x_2^{a-p+1}y_1^{q-1}y_2^{b-q+1}& \text{if}\ (a,b),(p,q)\ge(1,1) \\
	y_1^{q-1}y_2^{b-q+1}& \text{if}\ a=0, b>0, (p,q)\ge(1,1) \\
	x_1^{p-1}x_2^{a-p+1}& \text{if}\ b=0, a>0, (p,q)\ge(1,1) \\
	1 & \text{if}\ a=b=0, (p,q)=(1,1) \\
	0 & \text{if}\ p=0\ \text{or}\ q=0
	\end{cases} $$ 

	Given $\alpha:=(p_1,\ldots, p_{t})\in \Si(h)^{(a+1,b+1)},$ a partition of an integer $h$ of sides $(a+1, b+1),$ we denote by $\mathcal{M}(\alpha)\subseteq R_{(a,b)}$ the set of monomials $T(p',q')_{(a,b)}$ where $(p',q')\le (p_i,i)$ for some $i=1,\ldots, t.$ 
\end{definition}
Note that for any $ (1,1) \leq (p,q), (p',q') \leq (a+1,b+1)$ then $(p,q)\neq (p',q')$ iff  $T(p,q)_{(a,b)}\neq T(p',q')_{(a,b)}.$ 

\begin{example}\label{ExFer}
	Let be $\alpha:=(3,3,2,1,0)\in \Si(9)^{(2,4)},$ then we can draw $\alpha$ as a Ferrers diagram (see \cite{GVT}, Definition 3.13) with rows and columns labeled 
	$$\alpha:=\begin{array}{l|lllll}
	& 0 & 1 & 2 & 3 & 4 \\
	\hline
	0 & \bullet & \bullet & \bullet & \bullet & \\  
	1 & \bullet & \bullet & \bullet & &\\
	2 & \bullet &  \bullet & & &\\
	\end{array} $$
	Then $\mathcal{M}(\alpha)\subseteq R_{(2,4)}$ is the set of all the monomials $x_1^{i}x_2^{2-i}y_1^jy_2^{4-j}$ where $(i,j)$ is a non-empty entry in the above diagram. 
	
\end{example}
 
\begin{remark}\label{Mbilex}
	For any $\alpha:=(p_1,\ldots, p_{t})\in \Si(\cdot)^{(a+1,b+1)}$ the set of monomials of degree $ {(a,b)}$ not in $\mathcal{M}(\alpha)$ is a bilex set.  	Indeed, let $u:=x_1^{a_1}x_2^{a_2}\in R_{(a,0)}$ and $v:=y_1^{b_1}y_2^{b_2}\in R_{(0,b)}$ be monomials such that $uv\notin M(\alpha),$ i.e. $(a_1+1,b_1+1)\not\le (p_i,i)$ for any $i=1,\ldots,t.$ Given a monomial $u':=x_1^{c_1}x_2^{c_2}\in R_{(a,0)}$ with  $u'> u,$ then $c_1>a_1$ and $(c_1+1,b_1+1)> (a_1+1,b_1+1).$ Therefore $T(c_1+1,b_1+1)_{(a,b)}=u'v\notin \mathcal{M}(\alpha).$ 
\end{remark}

	One can also check that $M(\alpha)$ is a bilex set of monomials with respect to the order $x_2>x_1>y_2>y_1.$  


\begin{definition} \label{M(I)}	
	Let $I\subseteq R$ be a monomial bilex ideal and $(a,b)\in\N^2.$ We denote by $\M_{ab}({I})$ the set of monomial of degree $(a,b)$ not in $I_{(a,b)}.$ 
	
	Moreover we denote by
	$$p_i(\mathcal{M}_{ab}(I)):=\max\left(\{ p'\in \N\ |\ T(p',i)_{(a,b)}\in \M_{(a,b)}(I) \}\cup\{0\}\right) $$ and by 
	$$\alpha_{\M_{ab}(I)}:=\big(p_1(\mathcal{M}_{ab}(I)),\ldots, p_{b+1}(\mathcal{M}_{ab}(I))\big).$$
\end{definition}

\begin{remark}
   	From the definition of monomial bigraded ideal, it is immediate to check that  $\alpha_{\M_{ab}(I)}\in \Si(\cdot)^{(a+1,b+1)}$. Indeed $$a+1\ge p_1(\mathcal{M}_{ab}(I))\ge p_{2}(\mathcal{M}_{ab}(I))\ge \cdots \ge p_{b+1}(\mathcal{M}_{ab}(I)).$$
\end{remark}

\begin{example} 
	Take the bilex ideal minimally generated only in degree $(2,3)$  
	\[I=(x_1x_2y_1^2y_2, x_1x_2y_1^3, x_1^{2}y_1^2y_2, x_1^{2}y_1^3).\]
	Using the notation introduced in Definition \ref{T(p,q)} we write
	\[I=\left(T(2,3)_{(2,3)},T(2,4)_{(2,3)},T(3,3)_{(2,3)},T(3,4)_{(2,3)}\right).
	\]
	Then the set $\mathcal{M}_{(2,3)}(I)$ introduced in Definition \ref{M(I)} is
     \[\mathcal{M}_{(2,3)}(I)=\left\{\begin{array}{cccc}
     T(1,1)_{(2,3)}, &  T(1,2)_{(2,3)}, & T(1,3)_{(2,3)},& T(1,4)_{(2,3)},\\
     T(2,1)_{(2,3)}, & T(2,2)_{(2,3)}, & & \\ 
     T(3,1)_{(2,3)},& T(3,2)_{(2,3)}& &\\
     \end{array}\right\}.\]
      
    Thus we have $\alpha_{\M_{(2,3)}(I)}=(3,3,1,1)\in \Si(\cdot)^{(3,4)}.$
\end{example}

The following result holds.
\begin{lemma}\label{LemmaCap}
	Let $\alpha_1,\alpha_2\in\Si(\cdot)^{(\ell_1,\ell_2)}$ 
	be such that $\alpha_{1}\le \alpha_{2}.$ Then 
	$\mathcal{M}(\alpha_1)\subseteq \mathcal{M}(\alpha_{2}).$
\end{lemma}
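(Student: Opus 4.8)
The plan is to unwind the definitions of $\mathcal{M}(\alpha_1)$ and $\mathcal{M}(\alpha_2)$ and show the former is a subset of the latter by tracking each monomial through the correspondence $T(p,q)_{(a,b)}$. Write $\alpha_1 = (p_1,\ldots,p_t)$ and $\alpha_2 = (p'_1,\ldots,p'_t)$, both in $\Si(\cdot)^{(\ell_1,\ell_2)}$, so that $t = \ell_2$ and, since $\alpha_1 \le \alpha_2$, we have $p_i \le p'_i$ for every $i = 1,\ldots,t$. Fix $(a,b)$ with $(a+1,b+1) = (\ell_1,\ell_2)$ (this is the degree relevant to the correspondence). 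A typical element of $\mathcal{M}(\alpha_1)$ is $T(p',q')_{(a,b)}$ for some $(p',q') \le (p_i,i)$ with $1 \le i \le t$; the goal is to exhibit it also as an element of $\mathcal{M}(\alpha_2)$.

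The key step is the observation that $(p',q') \le (p_i,i)$ together with $p_i \le p'_i$ immediately gives $(p',q') \le (p'_i,i)$ by transitivity of the componentwise order on $\N^2$ (the second coordinate is unchanged, the first only increases). By Definition \ref{T(p,q)}, this means $T(p',q')_{(a,b)} \in \mathcal{M}(\alpha_2)$, since $\mathcal{M}(\alpha_2)$ is precisely the set of monomials $T(p',q')_{(a,b)}$ with $(p',q') \le (p'_i,i)$ for some $i$. Hence $\mathcal{M}(\alpha_1) \subseteq \mathcal{M}(\alpha_2)$. One small point to address explicitly is the degenerate cases in the definition of $T$: if $p' = 0$ or $q' = 0$ then $T(p',q')_{(a,b)} = 0$ is not a monomial and does not belong to either set, so we may assume $(p',q') \ge (1,1)$ throughout; and the injectivity remark following Definition \ref{T(p,q)} guarantees there is no collision between distinct index pairs, though that is not actually needed for the inclusion.

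I expect this proof to be essentially immediate — there is no real obstacle, only bookkeeping. The one place to be slightly careful is to make sure the index $i$ witnessing membership in $\mathcal{M}(\alpha_1)$ is reused unchanged as the witness for $\mathcal{M}(\alpha_2)$; the partitions have the same length $t = \ell_2$, so this is legitimate. The entire argument is a single application of transitivity of $\le$ on $\N^2$ combined with the hypothesis $p_i \le p'_i$, and it can be written in two or three lines.
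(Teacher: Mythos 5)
Your argument is correct and is exactly the definition-unwinding the paper has in mind when it dismisses this lemma as trivial: the witness index $i$ carries over unchanged, and $(p',q')\le(p_i,i)\le(p'_i,i)$ gives the inclusion. Nothing further is needed.
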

\begin{proof} It is trivial.
\end{proof}


\begin{lemma}\label{LemmaH} 
	Let $L$ be a bilex set of monomials of degree $(a,b)$ and let $I:=(L)\subseteq R$ be the ideal generated by the elements in $L.$ 
	Then 
	
	\begin{itemize}
		\item[i)]  $\alpha_{\M_{ab}(I)}\in \Si(H_{R/I}(a,b))^{(a+1,b+1)};$ 
		\item[ii)]  $\alpha_{\M_{a+1b}(I)}=\alpha_{\M_{ab}(I)}^{(1)}$ and $|\M_{a+1b}(I)|=|\M_{ab}(I)|+\lambda_1(\alpha_{\M_{ab}(I)});$ 
		\item[iii)]  $\alpha_{\M_{ab+1}(I)}=\alpha_{\M_{ab}(I)}^{(2)}$ and $|\M_{ab+1}(I)|=|\M_{ab}(I)|+\lambda_2(\alpha_{\M_{ab}(I)}).$ 
	\end{itemize}
\end{lemma}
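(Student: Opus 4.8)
The statement concerns a bilex ideal $I=(L)$ generated by a single bilex set $L$ of monomials in degree $(a,b)$, and describes how the invariant $\alpha_{\M_{ab}(I)}$ evolves when we move to degrees $(a+1,b)$ and $(a,b+1)$. My plan is to work directly with the combinatorics of the monomials $T(p,q)_{(a,b)}$, using Remark \ref{Mbilex} to translate "is in $\M_{cd}(I)$" into the partition language. The key observation is that $\M_{ab}(I)$ is exactly the complement of $I_{(a,b)}$, which by construction equals $\M(\alpha)$ for $\alpha = \alpha_{\M_{ab}(I)}$; so I first want to establish the clean identity $\M_{ab}(I)=\M(\alpha_{\M_{ab}(I)})$, from which $|\M_{ab}(I)|=|\alpha_{\M_{ab}(I)}|$ (the integer being partitioned) follows, giving part (i).

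For part (i), I would argue as follows. Since $L$ is bilex, $I_{(a,b)}$ is spanned by a bilex set, hence by Remark \ref{Mbilex} its complement $\M_{ab}(I)$ has the form $\M(\beta)$ for a unique $\beta\in\Si(\cdot)^{(a+1,b+1)}$; one checks $\beta=\alpha_{\M_{ab}(I)}$ directly from the definition of $p_i(\M_{ab}(I))$ as the largest $p'$ with $T(p',i)_{(a,b)}$ outside $I$. The size of $\M(\beta)$ is $p_1+\cdots+p_t=|\beta|$ because the monomials $T(p',q')_{(a,b)}$ are pairwise distinct (noted after Definition \ref{T(p,q)}) and the admissible pairs $(p',q')$ with $(p',q')\le(p_i,i)$ for some $i$ are counted exactly by the row-lengths $p_i$. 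Therefore $|\M_{ab}(I)|=|\beta|=H_{R/I}(a,b)$, so $\alpha_{\M_{ab}(I)}=\beta$ is a partition of $H_{R/I}(a,b)$ of sides $(a+1,b+1)$.

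For parts (ii) and (iii), the heart of the matter is to understand when a degree $(a+1,b)$ (resp. $(a,b+1)$) monomial lies in $I$, given that $I$ is generated in degree $(a,b)$. A monomial $w$ of degree $(a+1,b)$ is in $I$ iff it is divisible by some generator in $L$, i.e. iff some degree $(a,b)$ "predecessor" of $w$ lies in $L$; concretely $w=T(p',q')_{(a+1,b)}$ is in $I$ iff $T(p'-1,q')_{(a,b)}$ or $T(p',q')_{(a,b)}$ (the two monomials obtained by deleting one $x_1$ or one $x_2$) lies in $I_{(a,b)}$. Unwinding this with the formula for $\alpha^{(1)}$—increment the first $\lambda_1$ entries by $1$, leave the rest—shows $p_i(\M_{a+1b}(I))$ equals $p_i(\M_{ab}(I))+1$ when $p_i(\M_{ab}(I))=a+1$ (i.e. $i\le\lambda_1$) and equals $p_i(\M_{ab}(I))$ otherwise, which is precisely $\alpha_{\M_{ab}(I)}^{(1)}$. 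The cardinality statement $|\M_{a+1b}(I)|=|\M_{ab}(I)|+\lambda_1$ is then just $|\alpha^{(1)}|=|\alpha|+\lambda_1(\alpha)$, immediate from the definition of $\alpha^{(1)}$. Part (iii) is the same argument in the $y$-variables, with $\alpha^{(2)}$ (append a copy of the last entry) playing the role of $\alpha^{(1)}$, and here the combinatorics is even cleaner because the last row of the diagram simply gets duplicated.

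**Main obstacle.** The routine bookkeeping is harmless; the one point that needs genuine care is the divisibility analysis in step (ii): verifying that a monomial $w\in R_{(a+1,b)}$ belongs to $I=(L)$ if and only if one of its two degree-$(a,b)$ divisors lies in $L$, and—crucially—that this is compatible with the bilex structure so that taking "the complement of $I_{(a+1,b)}$" again produces a set of the form $\M(\gamma)$ with $\gamma=\alpha^{(1)}$ rather than something larger. This uses that $L$ being bilex forces $I_{(a+1,b)}$ to be bilex as well (a consequence of the theory of bilex ideals from \cite{ACD}, or a short direct check), so that Remark \ref{Mbilex} applies in degree $(a+1,b)$ too; once that is in hand, matching the row-lengths $p_i$ against the recipe for $\alpha^{(1)}$ is a finite, essentially one-line comparison. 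I would isolate that compatibility as the first substantive lemma in the proof and treat everything else as formal manipulation.
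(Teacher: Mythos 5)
Your proposal is correct and follows essentially the same route as the paper: identify $\M_{ab}(I)$ with the Ferrers diagram of $\alpha_{\M_{ab}(I)}$ via the bilex property (so that $\sum_i p_i(\M_{ab}(I))=|\M_{ab}(I)|=H_{R/I}(a,b)$), and then use the fact that $I$ is generated only in degree $(a,b)$ to compare each $p_i(\M_{a+1\,b}(I))$ with $p_i(\M_{ab}(I))$ through the divisors $w/x_1=T(p'-1,q')_{(a,b)}$ and $w/x_2=T(p',q')_{(a,b)}$, exactly the computation the paper performs with the $\max$'s. Your version merely makes explicit a couple of points the paper leaves implicit (the counting $|\M(\alpha)|=\sum p_i$ and the converse direction when $p_i=a+1$ forces $p_i'=a+2$), so no substantive difference.
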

\begin{proof} 
	Item $i)$ follows from $\M_{ab}(I)\subseteq R_{(a,b)}$ and $H_{R/I}(a,b)=\dim_k R_{(a,b)}- \dim_k I_{(a,b)}= |\M_{ab}(I)|.$ 
	Let $\alpha_{\M_{ab}(I)}=(p_1,\ldots,p_t)\in \Si(\cdot)^{(a+1,b+1)}$ 
	and $\alpha_{\M_{a+1b}(I)}=(p_1',\ldots, p_t')\in \Si(\cdot)^{(a+1,b+1)}.$ Assume $i\in\{1,\ldots, t\}$ such that $p_i'<a+2,$ then
	$p'_i=\max\{q\ | \ T(q,i)_{(a+1,b)}\in \M_{a+1b} \}=\max\{q\ | \ T(q,i)_{(a+1,b)}\notin I_{(a+1,b)} \} \le \max\{q\ | \ T(q,i)_{(a,b)}\notin I_{(a,b)} \}=p_i.$   Since $I$ is only generated in degree $(a,b),$ we get $p_i=p_i'.$
	If $p_i'=a+2,$ then $T(a+2,i)_{(a+1,b)}\notin I_{(a+1,b)}$ and then $T(a+1,i)_{(a,b)}\notin I_{(a,b)}.$ 
	Analogously we get item $iii).$ 
	
	
\end{proof}

\begin{example}
	Let $\alpha:=(3,3,2,1,0)\in \Si(\cdot)^{(2,4)},$ then $\alpha^{(1)}=(4,4,2,1,0)\in \Si(\cdot)^{(3,4)}$ and $\alpha^{(2)}=(3,3,2,1,0,0)\in \Si(\cdot)^{(2,5)}.$ Using Ferrers diagrams as in Example \ref{ExFer} we have 
	\[\alpha^{(1)}:=\begin{array}{l|lllll}
	& 0 & 1 & 2 & 3 & 4 \\
	\hline
	0 & \bullet & \bullet & \bullet & \bullet & \\  
	1 & \bullet & \bullet & \bullet & &\\
	2 & \bullet &  \bullet & & &\\
	3 & \bullet &  \bullet & & &\\
	\end{array} \ \ \ \alpha^{(2)}:=\begin{array}{l|llllll}
	& 0 & 1 & 2 & 3 & 4 & 5 \\
	\hline
	0 & \bullet & \bullet & \bullet & \bullet & & \\  
	1 & \bullet & \bullet & \bullet & & & \\
	2 & \bullet &  \bullet & & & & \\
	\end{array} \]

\end{example}

We are ready to prove the main result of this paper.

\begin{theorem}\label{main}
	Let $H:\N\times\N\to \N$ be a numerical function. Then the following are equivalent
	\begin{enumerate}
		\item[1)] $H$ is a Ferrers function;
		\item[2)] $H=H_{R/I}$ for some bigraded ideal $I\subseteq R.$
	\end{enumerate}  
\end{theorem}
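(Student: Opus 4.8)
The plan is to prove both implications, with the harder direction being $(1)\Rightarrow(2)$: constructing an explicit bigraded ideal from the data of a Ferrers function.

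\emph{$(2)\Rightarrow(1)$.} Suppose $H=H_{R/I}$ for a bigraded ideal $I$. By Theorem 4.14 of \cite{ACD} we may assume $I$ is a bilex ideal, and in fact (since the Hilbert function only depends on each graded piece separately) we may assume $I$ is a monomial bilex ideal. For each $(a,b)$, set $\alpha_{ab} := \alpha_{\M_{ab}(I)}$, the partition attached to $I$ in Definition \ref{M(I)}. By Lemma \ref{LemmaH}(i), $\alpha_{ab} \in \Si(H(a,b))^{(a+1,b+1)}$, so it is a partition of the right integer with the right sides. It remains to check the compatibility inequalities $\alpha_{ab} \le \alpha_{a-1b}^{(1)}$ and $\alpha_{ab} \le \alpha_{ab-1}^{(2)}$. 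Here I would argue as in the proof of Lemma \ref{LemmaH}(ii)-(iii): the inclusion $I_{(a-1,b)} \cdot R_{(1,0)} \subseteq I_{(a,b)}$ forces the monomials outside $I$ in degree $(a,b)$ to sit inside the "shift" of those outside $I$ in degree $(a-1,b)$, which translates exactly into $\alpha_{ab} \le \alpha_{a-1b}^{(1)}$; symmetrically for the $y$-direction. Hence $H$ is a Ferrers function with witnesses $\alpha_{ab}$.

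\emph{$(1)\Rightarrow(2)$.} Let $\{\alpha_{ij}\}_{(i,j)\in\N^2}$ be a family of partitions witnessing that $H$ is a Ferrers function. For each $(a,b)$ form the monomial set $\mathcal{M}(\alpha_{ab}) \subseteq R_{(a,b)}$ of Definition \ref{T(p,q)}, and let $L_{ab} := R_{(a,b)}\setminus \mathcal{M}(\alpha_{ab})$ be the complementary set of monomials; by Remark \ref{Mbilex} each $L_{ab}$ is bilex. Define $I := \bigoplus_{(a,b)} \langle L_{ab}\rangle_k$ as a $k$-subspace of $R$. Two things must be verified: (a) $I$ is an ideal (equivalently, a bigraded — indeed monomial bilex — ideal), and (b) $H_{R/I} = H$, i.e. $|\mathcal{M}(\alpha_{ab})| = H(a,b)$ for all $(a,b)$. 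For (b): since $\alpha_{ab}$ is a partition of $H(a,b)$, and the monomials $T(p',q')_{(a,b)}$ appearing in $\mathcal{M}(\alpha_{ab})$ are in bijection with the "boxes" of the Ferrers diagram of $\alpha_{ab}$ (using the injectivity remark after Definition \ref{T(p,q)}), we get $|\mathcal{M}(\alpha_{ab})| = \sum_i p_i = H(a,b)$.

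\emph{The main obstacle} is (a): showing $I$ is closed under multiplication by $x_1,x_2,y_1,y_2$. Equivalently, for every monomial $m \in L_{ab}$ we need $x_i m \in L_{a+1,b}$ and $y_j m \in L_{a,b+1}$; since $L$ is the complement of $\mathcal{M}$, this says $\mathcal{M}(\alpha_{a+1,b})$ pulled back under division by $x_i$ lands in $\mathcal{M}(\alpha_{ab})$, and similarly for $y_j$. I would prove this in two steps. First, a purely combinatorial lemma: for any partition $\beta \in \Si(\cdot)^{(a+1,b+1)}$, the monomial set $\mathcal{M}(\beta^{(1)}) \subseteq R_{(a+1,b)}$ is exactly the set of monomials $m$ such that $m/x_1 \in \mathcal{M}(\beta)$ or $m/x_2 \in \mathcal{M}(\beta)$ (interpreting $m/x_i$ as $0$ when $x_i \nmid m$), and likewise $\mathcal{M}(\beta^{(2)}) \subseteq R_{(a,b+1)}$ is the set of $m$ with $m/y_1$ or $m/y_2$ in $\mathcal{M}(\beta)$; these are the content of Lemma \ref{LemmaH}(ii)-(iii) read backwards, combined with the bilex structure from Remark \ref{Mbilex}. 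Second, apply the Ferrers inequalities: $\alpha_{a+1,b} \le \alpha_{ab}^{(1)}$ gives, by Lemma \ref{LemmaCap}, $\mathcal{M}(\alpha_{a+1,b}) \subseteq \mathcal{M}(\alpha_{ab}^{(1)})$, and then the combinatorial lemma shows every $m \in \mathcal{M}(\alpha_{a+1,b})$ has $m/x_i \in \mathcal{M}(\alpha_{ab})$ for $i=1$ or $2$ — contrapositively, $x_i \cdot L_{ab} \subseteq L_{a+1,b}$. The $y$-direction is symmetric using $\alpha_{ab+1}\le\alpha_{ab}^{(2)}$ and $\mathcal{M}(\beta^{(2)})$. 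This establishes that $I$ is a monomial bilex ideal with $H_{R/I}=H$, completing the proof.
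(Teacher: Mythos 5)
Your overall strategy coincides with the paper's: both directions use the same construction ($\alpha_{ab}:=\alpha_{\M_{ab}(I)}$ for $(2)\Rightarrow(1)$, and $I_{(a,b)}$ spanned by the monomials of $R_{(a,b)}$ outside $\mathcal{M}(\alpha_{ab})$ for $(1)\Rightarrow(2)$). Your $(2)\Rightarrow(1)$ argument is essentially the paper's, and the counting step $|\mathcal{M}(\alpha_{ab})|=H(a,b)$ is fine.

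The gap is in the key step of $(1)\Rightarrow(2)$. Your ``combinatorial lemma'' is false as stated: $\mathcal{M}(\beta^{(1)})$ is \emph{not} the set of monomials $m$ with $m/x_1\in\mathcal{M}(\beta)$ or $m/x_2\in\mathcal{M}(\beta)$. Take $\beta=(1)\in\Si(1)^{(2,1)}$, so $(a,b)=(1,0)$: then $\mathcal{M}(\beta)=\{x_2\}$, $\lambda_1(\beta)=0$, hence $\beta^{(1)}=(1)$ and $\mathcal{M}(\beta^{(1)})=\{x_2^2\}$; yet $m=x_1x_2$ has $m/x_1=x_2\in\mathcal{M}(\beta)$ while $m\notin\mathcal{M}(\beta^{(1)})$. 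Only the inclusion ``$\subseteq$'' of your lemma holds. More importantly, even a correct ``or'' statement would not give what the ideal property requires: to get $x_1L_{ab}\subseteq L_{a+1,b}$ \emph{and} $x_2L_{ab}\subseteq L_{a+1,b}$ you must show that for every $m\in\mathcal{M}(\alpha_{a+1,b})$ and for \emph{each} $i$ with $x_i\mid m$ one has $m/x_i\in\mathcal{M}(\alpha_{ab})$. The ``or'' version leaves open the possibility that $m/x_1\in\mathcal{M}(\alpha_{ab})$ but $m/x_2\in L_{ab}$, in which case $x_2\cdot(m/x_2)=m\notin L_{a+1,b}$ and $I$ is not an ideal; so the claimed contrapositive is invalid. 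The ``and'' statement is in fact true: writing $m=T(p,q)_{(a+1,b)}\in\mathcal{M}(\beta^{(1)})$, i.e.\ $p\le p'_q$, one checks $p-1\le p_q$ when $x_1\mid m$ and $p\le p_q$ when $x_2\mid m$, in both cases $q\le\lambda_1(\beta)$ and $q>\lambda_1(\beta)$; combined with Lemma \ref{LemmaCap} applied to $\alpha_{a+1,b}\le\alpha_{ab}^{(1)}$ this closes the argument. That computation is exactly what the paper carries out directly (its inequalities $c_q<p$ and $c'_q\le c_q$ in items $i)$--$iv)$), so the repair is local, but as written your step fails.
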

\begin{proof}
		$(1)\to (2)$ Let $H$ be a Ferrers function and let $\{\alpha_{ab}\}\in \Si{(H(a,b))}^{(a+1,b+1)}$ be the set of partitions as required in Definition \ref{DefFerrers}.
		For each $(a,b)\in \N^2$, let $I_{(a,b)}$ be the $k$-vector space generated by monomials of degree ${(a,b)}$ not in $\mathcal{M}(\alpha_{ab}).$ Then we claim that $I:=\oplus_{(a,b)\in\N^2} I_{(a,b)}$ is an ideal of $R.$ Note that, in order to prove the claim, it is enough to show that $(x_1,x_2,y_1,y_2)T\in I$ for any monomial $T\in I.$ 	
		Thus, let $T(p,q)_{(a,b)}\in I_{(a,b)}$ be monomial  of degree $(a,b)$. Say $\alpha_{(a,b)}= (c_1,\ldots, c_b)$, $\alpha_{(a+1,b)}= (c'_1,\ldots, c'_b)$ and  $\alpha_{(a,b+1)}= (c''_1,\ldots, c''_{b+1})$. We collect the relevant facts 
		\begin{itemize}
			\item[$i)$]  $c_q< p$ by $T(p,q)_{(a,b)}\notin \mathcal{M}(\alpha_{ab})$;
			\item[$ii)$] $c'_q\le c_q$ and $c''_q\le c_q$. This follows from $\alpha_{a+1b}\le\alpha_{ab}^{(1)}$, $\alpha_{ab+1}\le\alpha_{ab}^{(2)}$ and $i).$
			\item[$iii)$] $x_1\cdot T(p,q)_{(a,b)}=T(p+1,q)_{(a+1,b)}$ and $x_2\cdot T(p,q)_{(a,b)}= T(p,q)_{(a+1,b)}$ by Definition \ref{T(p,q)};
			\item[$iv)$] $ y_1\cdot T(p,q)_{(a,b)}= T(p,q+1)_{(a,b+1)}$ and $y_2\cdot T(p,q)_{(a,b)}=T(p,q)_{(a,b+1)}$ by Definition \ref{T(p,q)}.
		\end{itemize}
		 
		Assume by contradiction $T(p+1,q)_{(a+1,b)}\notin I$, i.e., $T(p+1,q)_{(a+1,b)}\in \mathcal{M}(\alpha_{a+1,b}).$ Thus, by Definition \ref{T(p,q)} we have $(c'_{q},q)\ge (p+1,q)$, and then $c'_{q}\ge  p+1> c_q.$ This contradicts $ii).$ 
		Analogously, if $T(p,q)_{(a+1,b)}\in \mathcal{M}(\alpha_{a+1,b})$ then $c'_{q}\ge p> c_q$, contradicting $ii).$ 
		In a similar way, by using the inequalities $c_{q}\ge c_{q}'' \ge c_{q+1}'',$ one can show that $T(p,q+1)_{(a,b+1)}\in I$ and  $T(p,q)_{(a,b+1)}\in I.$

		$(2)\to (1)$   Let $I\subseteq R$ be a bilex ideal such that $H_{R/I}=H.$ 
		Then, for any $(a,b)\in \N^2,$ we set $\alpha_{ab}:=\alpha_{\M_{ab}(I)}.$ By item $i)$ in Lemma \ref{LemmaH}, we have $\alpha_{ab}\in \Si(H_{R/I}(a,b))^{(a+1,b+1)}.$ Moreover, by items $ii)$ and $iii)$ in Lemma \ref{LemmaH},  the condition in Definition \ref{DefFerrers} holds since $(x_1,x_2)I_{(a-1,b)}\subseteq I_{(a,b)}$ and $(y_1,y_2)I_{(a,b-1)}\subseteq I_{(a,b)}$.

\end{proof}


\begin{remark}
	Lemma \ref{LemmaH} gives the maximal growths for an Hilbert function  $H$ of a bigraded algebra $R/I.$ A maximal growth for $H$ in degree $(a,b)$ is $(H(a,b)+\lambda_1, H(a,b)+\lambda_2)$ with $(\lambda_1,\lambda_2)$ a maximal element in $\Li{(H(a,b))}^{(a+1,b+1)}.$ The set of maximal growths is not enough to describe the behavior of an Hilbert function. E.g. let $H$ be such that  $$H:=\begin{array}{l|ccccccc}
	& 0 & 1 & 2 & 3 & 4 & 5 & \cdots \\
	\hline
	0 & 1 &  2 &  3 &  4 &  5 & 0 & \cdots \\
	1 & 2 &  4 &  6 &  8 & 10 & 0 & \cdots \\
	2 & 3 &  6 &  9 &  8 &  9 & 0 &\cdots \\
	3 & 4 &  8 &  8 & 10 &  0 & 0  &\cdots \\
	4 & 5 & 10 &  9 &  0 &  0 & 0 &\cdots \\
	5 & 0 &  0 &  0 &  0 &  0 & 0 &\cdots \\
	\vdots & \vdots & \vdots & \vdots & \vdots & \vdots & \vdots & \ddots \\
	\end{array}$$
	To ensure the maximal growth in degree $(2,3)$ and $(3,2)$ we have to take $\alpha_{32}=(4,2,2)$ and $\alpha_{23}=(3,3,1,1).$ 
	Then $\lambda_1(\alpha_{23})=\lambda_2(\alpha_{32})=2$ so $H(3,3)\le \min\{H(2,3)+2, H(3,2)+2 \}=10$ but $\alpha_{23}^{(1)}\cap \alpha_{32}^{(2)} = (4,2,1,1)$ and then $H$ fails to be a Ferrers function.
\end{remark}	

We end this paper with an application of Ferrers functions. Admissible functions were introduced in \cite{GMR} in order to study the Hilbert functions of reduced 0-dimensional schemes in $\popo.$     

\begin{definition}[\cite{GMR}, Definition 2.2]
	Let $H:\N^2\to \N$ be a numerical function and denote by $c_{ij}:=\Delta H(i,j)= H(i,j)+H(i-1,j-1)-H(i-1,j)-H(i,j-1).$
	Then we say that $H$ is an admissible function if
	\begin{enumerate}
		\item $c_{ij}\le 1,$ and $c_{ij}=0$ for $i\gg 0, j\gg 0;$
		\item if  $c_{ij}\le 0$ then $c_{uv}\le 0,$ for all $(u,v)\ge (i,j);$
		\item $0\le\sum_{t=0}^j c_{it}\le \sum_{t=0}^j c_{i-1t}$ and $0\le\sum_{t=0}^i c_{tj}\le \sum_{t=0}^i c_{tj-1}.$
	\end{enumerate}
\end{definition} 

Theorem 2.12 in \cite{GMR} shows that the Hilbert function of a 0-dimensional scheme in $\popo$ is an admissible function.
However the converse fails to be true (\cite{GMR} Example 2.14). Even if, in this paper, we do not worry about the geometrical point of view, it is still interesting to ask if an admissible function is a Ferrers function. Theorem \ref{ThmRg} gives a positive answer to this question.
\begin{theorem}\label{ThmRg}
	If $H$ is an admissible function, then $H$ is a Ferrers function.
\end{theorem}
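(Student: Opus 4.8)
The plan is to produce, for each $(i,j)\in\N^2$, a partition $\alpha_{ij}\in\Si(H(i,j))^{(i+1,j+1)}$ directly from the numbers $c_{ij}=\Delta H(i,j)$, and then verify the two comparison inequalities $\alpha_{ij}\le\alpha_{i-1\,j}^{(1)}$ and $\alpha_{ij}\le\alpha_{i\,j-1}^{(2)}$ of Definition \ref{DefFerrers}. The natural candidate is to let the $q$-th entry of $\alpha_{ij}$ record how many of the first $q$ ``rows'' of the staircase of first differences are still alive at bidegree $(i,j)$; concretely, setting $s_{iq}:=\sum_{t=0}^{q-1}c_{i\,t}$ (an $O$-sequence-type partial sum in the second variable, by condition (3)), one checks using $H(i,j)=\sum_{u\le i}\sum_{v\le j}c_{uv}$ that the vector whose $q$-th entry ($1\le q\le j+1$) is $\sum_{u=0}^{i}\big(\text{number of }v<q\text{ with }c_{uv}=1\big)$ is a partition of $H(i,j)$; condition (1), $c_{ij}\le 1$, forces each entry to be at most $i+1$, and monotonicity of the entries (the partition being weakly decreasing) will come from condition (2), which says the support of the $c$'s is a down-set so that once a column dies it stays dead. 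First I would set up this dictionary carefully and confirm $\alpha_{ij}$ has sides $(i+1,j+1)$.

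Next I would compute $\alpha_{i-1\,j}^{(1)}$ and $\alpha_{i\,j-1}^{(2)}$ in the same language. Passing from $(i-1,j)$ to $(i,j)$ adds the row of $c$'s with first index $i$; the operation $(1)$ increments exactly the entries equal to the old side $\ell_1=i$, i.e. the ``full'' columns, and leaves the rest. So the inequality $\alpha_{ij}\le\alpha_{i-1\,j}^{(1)}$ reduces to the statement: for each $q$, the number of $v<q$ with $c_{iv}=1$ is at most $1$ if the $q$-th column was already full at level $i-1$, and is $0$ otherwise --- which is precisely the content of condition (3), $\sum_{t<q}c_{it}\le\sum_{t<q}c_{i-1\,t}$, combined with condition (1). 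The inequality $\alpha_{ij}\le\alpha_{i\,j-1}^{(2)}$ is easier: $\alpha^{(2)}$ just repeats the last entry, and the claim becomes $\sum_{v<j}(\cdots)\le\sum_{v<j-1}(\cdots)+(\text{last entry})$, again immediate from the definitions and the nonnegativity in (3). Once all inequalities are in place, Definition \ref{DefFerrers} is satisfied and the theorem follows; alternatively one could invoke Theorem \ref{main} and instead exhibit a bilex ideal realizing $H$, but the direct combinatorial route above seems cleaner.

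The main obstacle I anticipate is bookkeeping: making the bijection between ``the number of live unit-differences in the first $q$ columns of row $i$'' and the $q$-th entry of $\alpha_{ij}$ precise enough that the three admissibility conditions translate \emph{exactly} into (a) $\alpha_{ij}$ being a genuine weakly-decreasing partition of the right integer with the right sides, (b) the $(1)$-inequality, and (c) the $(2)$-inequality, with no off-by-one errors in how $\lambda_1(\alpha_{i-1\,j})$ (the number of entries equal to $i$) interacts with which columns receive the $+1$. In particular one must check that condition (2) --- the down-set property of the support of $c$ --- is genuinely needed to guarantee the entries of $\alpha_{ij}$ are weakly decreasing and that no column that received a $+1$ under $(1)$ could overshoot; I expect this to be the one place where a naive definition of $\alpha_{ij}$ fails and condition (2) must be used. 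If the straightforward summation identities become unwieldy, a fallback is to induct on $i+j$, using the already-established bounds of Remark \ref{remG} together with admissibility to pin down $\alpha_{ij}$ from $\alpha_{i-1\,j}$ and $\alpha_{i\,j-1}$ as a suitable $\cap$ of their shifts.
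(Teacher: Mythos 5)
Your overall strategy is the same as the paper's: build $\alpha_{ij}$ directly from the array $c_{uv}=\Delta H(u,v)$ and check the two inequalities of Definition \ref{DefFerrers}. But the concrete partition you write down is not the right one, and as stated it is not a partition at all. The paper takes the $q$-th entry of $\alpha_{ij}$ to be the single column sum $\sum_{u=0}^{i}c_{u,q-1}$: these entries telescope to $H(i,j)$, each is at most $i+1$ by condition (1), and they are nonnegative and weakly decreasing in $q$ by the second chain of inequalities in condition (3) (the column partial sums $\sum_{t=0}^{i}c_{tv}$ decrease in $v$). Your candidate $q$-th entry $\sum_{u=0}^{i}\#\{v<q:\ c_{uv}=1\}$ is cumulative over all columns $v<q$, so it is weakly \emph{increasing} in $q$; summing it over $q=1,\dots,j+1$ counts each position with $c_{uv}=1$ once for every $q>v$, so the total is not $H(i,j)$; and it discards the negative values of $c_{uv}$, which do contribute to $H(i,j)$. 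The quantities $s_{iq}$ you introduce are row partial sums in the second variable, whereas the partition must be assembled from the columns of the $c$-array.

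The second problem is the attribution of the admissibility conditions. With the corrected entries, the inequality $\alpha_{ij}\le\alpha_{i-1,j}^{(1)}$ amounts to: $c_{i,q-1}\le 1$ when column $q-1$ is full at level $i-1$ (condition (1)), and $c_{i,q-1}\le 0$ when it is not. The latter is exactly where condition (2) is indispensable: if $\sum_{u=0}^{i-1}c_{u,q-1}<i$ then, since each summand is at most $1$, some $c_{u,q-1}\le 0$ with $u<i$, and the up-set property of $\{(u,v):c_{uv}\le 0\}$ forces $c_{i,q-1}\le 0$. The combination of conditions (3) and (1) that you invoke for this step controls cumulative row sums and does not by itself rule out $c_{i,q-1}=1$ in a non-full column. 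Your handling of the $(2)$-inequality is fine once the entries are corrected (it reduces to the already-established weak decrease of the last two entries). So the idea is the right one, but the definition of $\alpha_{ij}$ must be replaced by the plain column sums and the roles of conditions (2) and (3) straightened out before the verification goes through.
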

\begin{proof}Let $(a,b)\in \N$ and  $r\in\{0,\ldots, b\},$ we set  $p_{r+1}^{(ab)}:=\sum_{i=0}^{a} c_{ir}$ and $ \alpha_{ab}:= (p_1^{(ab)}, \ldots,p_{r+1}^{(ab)},\ldots p_{b+1}^{(ab)}   ).$
	From the definition of admissible function, we have $\alpha_{ab}\in \Si{(H{(a,b)})}^{(a+1,b+1)}$ and also $\alpha_{a-1b}^{(1)}\ge\alpha_{ab}$ and $\alpha_{ab-1}^{(2)}\ge\alpha_{ab}.$
\end{proof}

\begin{example}
	Example 2.14 in \cite{GMR} shows an admissible  function that fails to be the Hilbert function of a reduced set of points in $\popo.$  
	$$H:=\begin{array}{l|lllllll}
	& 0 & 1 & 2 & 3 & 4 & 5 & \cdots \\
	\hline
	0 & 1 &  2 &  3 &  4 &  5 & 5 & \cdots \\
	1 & 2 &  4 &  6 &  8 & 10 & 10 & \cdots \\
	2 & 3 &  6 &  8 &  9 & 10 &  10 &\cdots \\
	3 & 4 &  8 & 10 & 10 & 10 &  10 &\cdots \\
	4 & 5 & 10 & 10 & 10 & 10 & 10 &\cdots \\
	5 & 5 & 10 & 10 & 10 & 10 & 10 &\cdots \\
	\vdots & \vdots & \vdots & \vdots & \vdots & \vdots & \vdots & \ddots \\
	\end{array}$$
	
	Meanwhile, as Theorem \ref{ThmRg}, $H$ is a Ferrers function since one can write
	
	$$\alpha_{ij}:=\begin{array}{l|llllll}
	& 0 & 1 & 2 & 3 & 4 & \cdots \\
	\hline
	0 &    (1) &  (1,1) &  (1,1,1) &  (1,1,1,1) &  (1,1,1,1,1) & \cdots \\
	1 &    (2) &  (2,2) &  (2,2,2) &  (2,2,2,2) &  (2,2,2,2,2) & \cdots \\
	2 &    (3) &  (3,3) &  (3,3,2) &  (3,3,2,1) &  (3,3,2,1,1) & \cdots \\
	3 &    (4) &  (4,4) &  (4,4,2) &  (4,4,2,0) &  (4,4,2,0,0)     & \cdots \\
	4 &    (5) &  (5,5) &  (5,5,0) &  (5,5,0,0) &  (5,5,0,0,0)       & \cdots \\
	\vdots  & \vdots & \vdots & \vdots   & \vdots     & \vdots       & \ddots \\
	\end{array}$$
	
	Thus, the associated ideal $I:=(x_1^2y_1^2, x_1x_2y_1^3, x_2^3y_1^3, x_1^4y_2^2, x_1^4y_1y_2, x_1^5, y_1^5  )\subseteq R,$ has Hilbert function $H_{R/I}=H.$
\end{example}

\end{document}